\documentclass[11pt,a4paper]{article}
\usepackage{amsmath}
\usepackage{amsfonts}
\usepackage{latexsym,amsthm,amscd}

\newtheorem{thm}{Theorem}
\newtheorem{prop}[thm]{Proposition}
\newtheorem{defn}[thm]{Definition}
\newtheorem{lem}[thm]{Lemma}
\newcounter{alphthm}
\setcounter{alphthm}{0}

\usepackage[colorlinks]{hyperref}
\usepackage{color}
\usepackage{graphicx}
\usepackage{graphics}
\usepackage{makeidx}
\usepackage{showidx}
\usepackage{latexsym}
\usepackage{amssymb}
\usepackage{verbatim}
\usepackage{amsmath}
\usepackage{amsthm}
\usepackage{amsfonts}
\usepackage{amssymb,amsmath}
\usepackage{latexsym,amsthm,amscd}
\usepackage[all]{xy}

\newcommand{\be}{\begin{equation}}
\newcommand{\ee}{\end{equation}}
\newcommand{\ben}{\begin{enumerate}}
\newcommand{\een}{\end{enumerate}}
\newcommand{\beq}{\begin{eqnarray}}
\newcommand{\eeq}{\end{eqnarray}}
\newcommand{\beqn}{\begin{eqnarray*}}
\newcommand{\eeqn}{\end{eqnarray*}}

\newcommand{\pa}{\partial}
\newcommand{\bpf}{\begin{proof}}
\newcommand{\epf}{\end{proof}}
\newcommand{\bl}{\begin{lem}}
\newcommand{\el}{\end{lem}}
\newcommand{\bp}{\begin{prop}}
\newcommand{\ep}{\end{prop}}
\newcommand{\bd}{\begin{Def}}
\newcommand{\ed}{\end{Def}}
\newcommand{\bt}{\begin{thm}}
\newcommand{\et}{\end{thm}}
\newcommand{\R}{I\!\! R}
\def\nn{\nonumber}

\title{Circle-preserving transformations in Finsler spaces}
\author{Behroz Bidabad\thanks{ Visiting Professor
at the Indiana University Purdue University Indiana, emails: bbidabad@iupui.edu; bidabad@aut.ac.ir.}, Zhongmin Shen\thanks{ Supported in part by a NSF grant (DMS-0810159),email: zshen@math.iupui.edu.}}
\date{\footnotesize Department of Mathematics, Amirkabir University of Technology (Tehran Polytechnic), Tehran 15914, Iran.\\
Department of Mathematical Sciences, Indiana University Purdue University Indianapolis (IUPUI), Indianapolis, IN 46202, USA.}
\begin{document}
\maketitle

\begin{abstract}
Here, by extending the  definition of  circle to Finsler geometry,  we show that, every circle-preserving local diffeomorphism  is conformal. This result implies that in Finsler geometry, the definition of concircular change of metrics, a priori, does not require the conformal assumption.
\end{abstract}
 \emph{MSC:} Primary 53C60; Secondary 58B20. \\
 \emph{Keywords }: \textmd{Finsler space, conformal transformations, circle-preserving, concircular.}\\
\section*{Introduction.}
 In Riemannian geometry Vogel proved that every circle-preserving diffeomorphism is conformal, c.f. \cite{Vo} and \cite{Ku}. This theorem has been extended to pseudo-Riemannian manifolds in  \cite{Cat}. Here, we shall extend this theorem to Finsler manifolds. Using the Cartan covariant derivative along a curve, the  definition of  circles in a Finsler manifold is given. This definition is a natural extension of Riemannian one, see for instance, \cite{NY}. Some typical examples of circles are
 helices on a cylinder or a torus.  It should be remarked that these  circles need not to be closed in general, although it may be closed in some cases as on a torus.

 A geodesic circle in a Riemannian geometry, as well as in Finsler geometry, is a curve for which the first Frenet curvature $k_1$ is constant and the second curvature $k_2$ vanishes. In the other words a geodesic circle is a torsion free constant curvature curve. A \emph{concircular} transformation is defined by \cite{Ya} and \cite{Fi} in Riemannian geometry to be a conformal transformation which preserves geodesic circles.

 This notion has been similarly  developed in Finsler geometry by Agrawal and Izumi, cf. \cite{Ag,Iz,Iz2} and studied in \cite{AB,B} by one of the present authors.

The results obtained in this paper shows  that in the definition of concircular transformations, a priori, the conformal assumption is not necessary. That is to say, if a transformation preserves geodesic circles then it is conformal.

\section{Preliminaries}
 Let $M$ be a real n-dimensional  manifold of class $C^ \infty$. We
denote by  $TM\rightarrow M$ the  bundle
  of tangent vectors and by $ \pi:TM_{0}\rightarrow M$ the fiber bundle of
non-zero tangent vectors.
  A {\it{Finsler structure}} on $M$ is a function
$F:TM \rightarrow [0,\infty )$, with the following properties: (I)
$F$ is differentiable ($C^ \infty$) on $TM_{0}$; (II) $F$ is
positively homogeneous of degree one in $y$, i.e.
 $F(x,\lambda y)=\lambda F(x,y),  \forall\lambda>0$, where we denote
 an element of $TM$ by $(x,y)$.
(III) The Hessian matrix of $F^{2}/2$ is positive definite on
$TM_{0}$; $(g_{ij}):=\left({1 \over 2} \left[
\frac{\partial^{2}}{\partial y^{i}\partial y^{j}} F^2
\right]\right).$
A \textit{Finsler
manifold} $(M,g)$ is a pair of a differentiable manifold $M$ and a  tensor field $g=(g_{ij})$ on $TM$ which defined by a Finsler structure $F$. The spray of a Finsler structure $F$ is a vector field on $TM$
\[ G=y^i \frac{\pa }{\pa x^i} -2 G^i \frac{\pa }{\pa y^i},\]
where
\[ G^i =\frac{g^{il}}{4} \Big \{ \frac{\pa^2 F^2}{\pa x^m \pa y^l} y^m -\frac{\pa F^2}{\pa x^l} \Big \},\]
and $(g^{ij}):=(g_{ij})^{-1}$.

Let $(M,g)$ be a ${\cal C}^\infty$ Finsler manifold  and let $c$ be an oriented ${\cal C}^\infty$ parametric curve on
$M$ with equation $x^i(t)$.   We choose the pair $(x,\dot x)$, to be the line element
 along the curve $c$.

 Let $(x^i,y^i)$ be the local coordinates  on the slit tangent bundle $TM/0$. Using a Finsler connection  we can choose the natural basis $(\delta /\delta x^i,\partial/\partial y^i)$, where  $\frac{\delta }{\delta x^{j}}:=\frac{\partial }{\partial x^{j}}-N^i_j\frac{\partial }{\partial y^{i}}$, and $N^i_j :=\frac{1}{2}\frac{\pa G^i}{\pa y^j}$. The dual basis is given by $(d x^i,\delta y^i)$, where  $\delta y^k:=dy^k+N^k_ldx^l$.

Let $X$ be a ${\cal C}^\infty$ vector field
$X=X^i(t)\frac{\partial}{\partial x^i}|_{c(t)}$ along $c(t)$. We denote the Cartan covariant derivative of $X$ in direction of $\dot c = \frac{dx^j}{dt}\frac{\partial}{\partial x^j}$ by $\nabla_{_{\dot c}}X = \frac{\delta X^i}{dt} \frac{\pa }{\pa x^i}|_{c(t)}$. The components
$\frac{\delta X^i}{dt}$ can be determined explicitly as follows.
\be \label{Eq;CovDerAlongCurves2}
\nabla_{_{\dot c}}X =\nabla_{_{\dot c}}X^i\frac{\partial}{\partial x^i}=\frac{d X^i}{ dt}\frac{\partial}{\partial x^i}+X^i \nabla_{\dot c}\frac{\partial}{\partial x^i}.
\ee
The last term  in Eq. (\ref{Eq;CovDerAlongCurves2}) is given by $\nabla_{{\dot c}}\frac{\partial}{\partial x^i}:=\omega_i^j(\dot c)\frac{\partial}{\partial x^j}$, where $\omega_i^j(\dot c):=(\Gamma_{ik}^jdx^k+C^j_{ik}\delta y^k)(\dot c)$, is the connection $1$-form of Cartan connection, cf. \cite{BCS}, page 39. Here,  the coefficients $\Gamma^{i}_{jk}$ are Christoffel symbols with respect to the horizontal partial derivative  $\frac{\delta }{\delta x^{j}}$, that is,
     $$\Gamma^{i}_{jk}:=\frac{1}{2}g^{ih}(\frac{\delta g_{hk}}{\delta x^{j}}+ \frac{\delta g_{hj}}{\delta x^{k}}- \frac{\delta g_{jk}}{\delta x^{h}}),$$
      and  $C_{hk}^{i}:=\frac{1}{2}g^{im}\frac{\pa g_{mk}}{\pa y^h}$, is the \emph{Cartan torsion tensor}.
  Plugging $\delta y^k$ in $\omega_i^j(\dot c)$
\begin{eqnarray*}
\omega_i^j(\dot c)&=&(\Gamma_{ik}^jdx^k+C^j_{ik}(d y^k+N^k_ldx^l)(\dot c),\\
&=&(\Gamma_{ik}^jdx^k+C^j_{is}N^s_kdx^k)(\frac{dx^l}{dt}\frac{\partial}{\partial x^l}),\\
&=&(\Gamma_{il}^j+C^j_{is}N^s_l)(\frac{dx^l}{dt}),
\end{eqnarray*}
and replacing the resulting term  in Eq. (\ref{Eq;CovDerAlongCurves2}), we obtain
 the components of Cartan covariant derivative of $X$ in direction of $\dot c$.
\be\label{CartanCovDerAlongC1}
\frac{\delta}{ dt} X^i  = \frac{d X^i}{ dt}+ (\Gamma_{kh}^i+C^i_{ks}N^s_h)X^k\frac{dx^h}{dt}.
\ee
The Cartan covariant derivative $\nabla_{\dot c}$ is  metric-compatible  along $c$, that is, for any vector fields $X$ and $Y$ along $c$,
 \be \nn
 \frac{d}{dt} g(X,Y)= g(\nabla_{\dot c}X, Y)+g(X, \nabla_{\dot c}Y).
 \ee
More details about this preliminaries may be found in \cite{AZ1,BCS,Sh}.

\section{Circles in a Finsler manifold}

As a natural extension of circles in Riemannian geometry, cf. \cite{NY}, we  recall the  definition of a generalized circle in a Finsler manifold, called here simply, circle.
\begin{defn}
 Let $(M,g)$ be a Finsler manifold  of class $C^\infty$.  A smooth curve $c:I\subset \R\rightarrow M$ parameterized by arc length $s$ is called a \emph{circle} if there exist a unitary vector field $Y=Y(s)$ along $c$ and a positive constant $k$ such that
 \begin{eqnarray}
\label{Eq;cov der of X 1}\nabla_{c'} X= k Y,\\
\label{Eq;cov der of X 2}\nabla_{c'} Y= - k X,
 \end{eqnarray}
 where, $X:=c'=dc/ds$ is the unitary tangent vector field at each point $c(s)$.
  The number $1/k$ is called the \emph{radius} of the circle.
\end{defn}
Comparing this definition of circle with definition of a geodesic circle in Finsler geometry, recalled in the introduction, we find out that if in the definition of a geodesic circle we exclude the trivial case, $k_1=0$, that is, if we remove  geodesics, then we obtain the  definition of a  circle in a Finsler manifold.

\begin{lem}\label{prop;diff eq geod circ}
Let $c=c(s)$  be a unit speed  curve on an $n$-dimensional Finsler manifold $(M,g)$. If $c$ is a circle, then it  satisfies the following ODE
 \be\label{Eq;circle equ+1}
 \nabla_{c'}\nabla_{c'} X+ g(\nabla_{c'}X,\nabla_{c'}X) X=0,
 \ee
  where, $ g( \  ,\  )$ denotes scalar product determined by the tangent vector $c'$.
Conversely, $c $ satisfies (\ref{Eq;circle equ+1}), then it is either a geodesic or a circle.
\end{lem}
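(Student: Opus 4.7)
The plan is to establish both directions by direct computation using only the definition of a circle, the metric compatibility of the Cartan connection along curves, and the unit speed assumption.

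For the forward direction, I would simply substitute the circle equations. From $\nabla_{c'}X = kY$ and $\nabla_{c'}Y = -kX$, a single further application of $\nabla_{c'}$ gives $\nabla_{c'}\nabla_{c'}X = k\nabla_{c'}Y = -k^2 X$. Since $Y$ is unitary, $g(\nabla_{c'}X,\nabla_{c'}X) = k^2 g(Y,Y) = k^2$, so $\nabla_{c'}\nabla_{c'}X + g(\nabla_{c'}X,\nabla_{c'}X)X = -k^2 X + k^2 X = 0$, establishing \eqref{Eq;circle equ+1}.

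For the converse, set $f(s) := g(\nabla_{c'}X,\nabla_{c'}X)$. The strategy is to show that $f$ is constant along $c$, so that $k := \sqrt{f}$ is a well-defined nonnegative constant, and then either $k=0$ (geodesic) or $k>0$ (circle, with $Y := k^{-1}\nabla_{c'}X$). To show $f$ is constant I would differentiate using metric compatibility along the curve stated in the preliminaries:
\begin{equation*}
f'(s) = 2\,g\bigl(\nabla_{c'}\nabla_{c'}X,\ \nabla_{c'}X\bigr).
\end{equation*}
Substituting the hypothesis \eqref{Eq;circle equ+1} yields $f'(s) = -2f(s)\, g(X,\nabla_{c'}X)$. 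The unit speed condition $g(X,X) = F(c')^2 = 1$, differentiated via metric compatibility, gives $g(\nabla_{c'}X, X) = 0$. Hence $f'(s) \equiv 0$ and $f$ is a nonnegative constant.

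With $f$ constant, the case $f=0$ immediately gives $\nabla_{c'}X = 0$, i.e.\ $c$ is a geodesic. In the case $f = k^2$ with $k>0$, define $Y := k^{-1}\nabla_{c'}X$. Then $g(Y,Y) = f/k^2 = 1$, so $Y$ is unitary, and the first circle equation $\nabla_{c'}X = kY$ holds by construction. The second equation follows from the hypothesis:
\begin{equation*}
\nabla_{c'}Y = k^{-1}\nabla_{c'}\nabla_{c'}X = -k^{-1} f\, X = -kX,
\end{equation*}
which verifies that $c$ is a circle of curvature $k$. The main subtlety I expect is the proper use of metric compatibility in the Finsler setting, where $g$ depends on the reference vector $c'$; this is precisely the tool singled out at the end of Section~1, so it fits the framework without difficulty.
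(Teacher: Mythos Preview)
Your proposal is correct and follows essentially the same route as the paper's proof: both directions rely on metric compatibility of the Cartan covariant derivative together with the unit speed condition $g(X,\nabla_{c'}X)=0$, leading to the constancy of $g(\nabla_{c'}X,\nabla_{c'}X)$ and the same case split between geodesic and circle. The only cosmetic difference is that in the forward direction you read off $g(\nabla_{c'}X,\nabla_{c'}X)=k^2$ directly from $\nabla_{c'}X=kY$ with $Y$ unitary, whereas the paper obtains it by pairing $\nabla_{c'}\nabla_{c'}X=-k^2X$ with $X$ and invoking metric compatibility once more.
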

\begin{proof} Assume that  $c$ is a circle parameterized by arc-length. By means of metric compatibility  we have
$$ g(\nabla_{c'} X, X) = \frac{1}{2}\frac{d}{ds} [ g(X, X) ] =0.$$
 Eqs. (\ref{Eq;cov der of X 1}) and (\ref{Eq;cov der of X 2}) yield
\be
\nabla_{c'}\nabla_{c'}X= k \nabla_{c'} Y= -k^2 X.\label{Eq;circle equ+2}
\ee
This implies
\[ k^2 = - g(\nabla_{c'}\nabla_{c'}X, X) = \frac{d}{ds} [ g(\nabla_{c'} X, X) ] + g(\nabla_{c'} X, \nabla_{c'} X).\]
Plugging it into (\ref{Eq;circle equ+2}), we obtain (\ref{Eq;circle equ+1}).

Conversely, assume that $c=c(s)$ is a unit speed curve on $M$ which satisfies Eq. (\ref{Eq;circle equ+1}).  Then by metric compatibility property of $\nabla_{c'}$, we have
\begin{eqnarray}\label{Eq;metric comp 2}
\frac{d}{ds} g( \nabla_{c'}X,\nabla_{c'}X ) = 2 g(\nabla_{c'}\nabla_{c'}X, \nabla_{c'}X).
\end{eqnarray}
Plugging Eq. (\ref{Eq;circle equ+1}) into this equation we have
\begin{eqnarray}\label{Eq;metric comp 3}
  g(\nabla_{c'}\nabla_{c'}X, \nabla_{c'}X) = - g(\nabla_{c'}X, \nabla_{c'}X) g(X, \nabla_{c'}X).
\end{eqnarray}
Taking into account Eqs. (\ref{Eq;metric comp 2}) and (\ref{Eq;metric comp 3}) and the fact that $g(X, \nabla_{c'} X)=0$ for unitary tangent vector field $X$, we have
\begin{eqnarray*}
\frac{d}{ds} g( \nabla_{c'}X,\nabla_{c'}X ) = 0.
\end{eqnarray*}
Therefore $k^2:=g( \nabla_{c'}X,\nabla_{c'}X )$ is constant along $c$. If $k=0$, then $c$ is a geodesic. If $k\neq 0$,   set
\be\label{Eq;metric comp 4}
Y=\frac{1}{k} \nabla_{c'}X,
\ee
then $Y$ is a unit vector field which satisfies Eq. (\ref{Eq;cov der of X 1}). The covariant derivative of Eq. (\ref{Eq;metric comp 4}) and using Eq. (\ref{Eq;circle equ+1})  yields to
\begin{eqnarray*}
\nabla_{c'}Y=\frac{1}{k} \nabla_{c'}\nabla_{c'}X = -k X.
\end{eqnarray*}
Thus we have Eqs. (\ref{Eq;cov der of X 1}) and (\ref{Eq;cov der of X 2}), hence $c$ is a circle. This completes the proof.
\end{proof}

\bigskip
For a curve $c=c(s)$ paramertized by arc-length $s$,  $c'(s):=\frac{dc}{ds}(s)$ is the unit tangent vector along $c$. Let
$$  c''(s):=\nabla_{c'} c', \ \ \ \ c''(s):=\nabla_{c'}\nabla_{c'} c', \ \ \ \
c'''(s):=\nabla_{c'}\nabla_{c'}\nabla_{c'} c'.$$
We can express (\ref{Eq;circle equ+1}) as follows
\be
c''' + g(c'', c'') c' =0. \label{Eq;circle equ+3}
\ee
Equivalently, differential equation of a circle is given by
  \begin{equation}\label{Eq: 1.9+1}
  c''' = -k^2 c',
\end{equation}
 where $k=\sqrt{ g(c'',c'')}$ is the constant  first Frenet curvature. Hence,
$c(s)$ is a circle if and only if
$c'''$ is a tangent vector field along $c$, or equivalently $c'''$ is a scalar multiple of $c'$ or $\dot c$.

 If $c=c(t)$ is parameterized  by an arbitrary parameter $t$, we denote its successive covariant derivatives by $\dot c:=\frac{dc}{dt}$, $\ddot c=:\nabla_{\dot{c}}\dot{c}$ and $\dddot c:= \nabla_{\dot{c}}\nabla_{\dot{c}} \dot{c}$.
We have the following successive relations between successive covariant derivatives.
\begin{eqnarray}\label{Eq;successive1 deri}
\dot c &=&  |\dot c| \ c', \\
\label{Eq;successive2 deri}
 \ddot c&=&|\dot c|^2 \  c'' + \frac{g(\dot c,\ddot c)}{|\dot c|}\ c',\\
 \label{Eq;successive3 deri}
  \dddot c&=&|\dot c|^3 \  c''' +3{ g(\dot c,\ddot c)}c''+ \frac{d}{dt}\bigg(\frac{g( \dot c,\ddot c)}{|\dot c|}\bigg)\ c'.
\end{eqnarray}
For an arbitrary parameter $t$ we have the following lemma.
\begin{lem}\label{lem;circle}
Let $(M,g)$ be a Finsler manifold and $c(t)$ a curve on $M$. Then $c(t)$ is a  circle with respect to the $g$, if and only if the vector field
$$V :=\dddot c-3\frac{ g(\dot c,\ddot c)}{g(\dot c,\dot c)}\ddot c,$$
is a tangent vector field along $c$ or equivalently a multiple of $\dot c$ or $c'$.
\end{lem}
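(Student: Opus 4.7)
The plan is to reduce everything to the arc-length characterization already obtained in Lemma~\ref{prop;diff eq geod circ}: by equation~(\ref{Eq: 1.9+1}), in the arc-length parameter $s$ the curve is a circle (or a geodesic, the degenerate case $k=0$) precisely when $c'''$ is a scalar multiple of $c'$. So the task is purely a change-of-parameter computation: re-express the condition ``$c'''\parallel c'$'' in terms of the $t$-derivatives $\dot c,\ddot c,\dddot c$, and show it is equivalent to ``$V\parallel \dot c$''.

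First I would substitute the three successive-derivative formulas (\ref{Eq;successive1 deri})--(\ref{Eq;successive3 deri}) into $V$, using $g(\dot c,\dot c)=|\dot c|^2$. The key observation is the coefficient $3g(\dot c,\ddot c)/g(\dot c,\dot c)$ chosen in the definition of $V$: when multiplied against the $c''$-piece of $\ddot c$ in (\ref{Eq;successive2 deri}), it produces exactly $3g(\dot c,\ddot c)\, c''$, which cancels the $c''$-term appearing in $\dddot c$ via (\ref{Eq;successive3 deri}). After this cancellation, the expression collapses to the form
\[
V \;=\; |\dot c|^{3}\, c''' \;+\; \Phi(t)\, c',
\]
for some explicit scalar $\Phi(t)$ built from $|\dot c|$ and $g(\dot c,\ddot c)$ and its derivative. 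Since $|\dot c|>0$, this identity shows that $V$ is proportional to $c'$ (equivalently to $\dot c$) if and only if $c'''$ is proportional to $c'$.

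Combining this equivalence with the arc-length criterion (\ref{Eq: 1.9+1}) from Lemma~\ref{prop;diff eq geod circ} finishes the proof: $V$ tangential along $c$ $\iff$ $c'''$ is a scalar multiple of $c'$ $\iff$ $c$ satisfies the circle equation (\ref{Eq;circle equ+1}), i.e.\ $c$ is a circle.

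The main obstacle is bookkeeping in the cancellation step — making sure the $c''$-coefficients vanish exactly, and that the surviving scalar in front of $c'$ is handled as an irrelevant tangential term (we do not need its explicit form, only that it is a scalar). Once the coefficient $3g(\dot c,\ddot c)/g(\dot c,\dot c)$ is seen to be precisely what removes the $c''$-component, the rest is routine.
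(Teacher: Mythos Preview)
Your proposal is correct and follows essentially the same approach as the paper: the paper also substitutes (\ref{Eq;successive2 deri}) and (\ref{Eq;successive3 deri}) into $V$, obtains exactly the identity $V=|\dot c|^3\,c'''+\Phi(t)\,c'$ (with the explicit $\Phi$ written out), and concludes that $V\parallel c'$ if and only if $c'''\parallel c'$. The only difference is that the paper records the explicit scalar $\Phi(t)=\frac{d}{dt}\big(g(\dot c,\ddot c)/|\dot c|\big)-3\,g(\dot c,\ddot c)^2/g(\dot c,\dot c)^{3/2}$, whereas you correctly note this is unnecessary for the argument.
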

\begin{proof}
It follows from (\ref{Eq;successive2 deri}) and (\ref{Eq;successive3 deri}) that
\[ \dddot c-3\frac{ g(\dot c,\ddot c)}{g(\dot c,\dot c)}\ddot c = |\dot{c}|^3 c''' + \Big \{ \frac{d}{dt}\bigg(\frac{g( \dot c,\ddot c)}{|\dot c|}\bigg) -3 \frac{ g(\dot{c}, \ddot{c})^2}{ g(\dot{c}, \dot{c})^{3/2} }
\Big \} c' .\]
Thus $c'''$ is parallel to $c'$ if and only if $\dddot c-3\frac{ g(\dot c,\ddot c)}{g(\dot c,\dot c)}\ddot c $ is proportional to $c'$
\end{proof}
Contrary to the Euclidean circle, the general notion of circle in Riemannian geometry as well as in  Finsler geometry, called here, simply  circle,  is not required that a circle be a closed curve. Although it may happen in some cases as small circles or helicoid curves on the sphere. In general, similar to the Riemannian circles, they are  spiral curves on the  subordinate spaces, for instance, spiral curves on cylindrical surfaces, conical surfaces and so on. Moreover, their length are not required to be bounded as in closed circle in Euclidean spaces.
\section{Circle-preserving diffeomorphisms}
   A  local diffeomorphism of Finsler manifolds is said to be  \emph{circle-preserving}, if it maps circles into circles.
More precisely let $M$  be a differentiable  manifold,  $g$ a Finsler metric on $M$,  $c(s)$ a $C^\infty$  arc length parameterized curve in a neighborhood $ U\subset M$ and $\delta/ ds$ the Cartan covariant derivative along $c$, compatible with $g$.

 Let $\phi: M \longrightarrow M$ be a local diffeomorphism on $M$, then it induces a second  Finsler metric $\bar g$  and a Cartan covariant derivative $\delta /d\bar s$  along $\bar c$ on $(M,\bar g)$ on some neighborhood $\bar U$ of $M$. Here, we denote the induced Finsler manifold by $(M,\bar g)$, in the sequel.
  We say that  the local diffeomorphism $\phi: (M,g)  \longrightarrow (M,\bar g)$ \emph{preserves circles}, if it maps circles to  circles.

 Let $ c(s)$ be a circle and  $\bar c(\bar s)$ its image by $\phi$,  where $\bar s=\bar s(s)$.  Then using definite positiveness of $g$ and $\bar g$ and the related fundamental forms
 \be \label{Eq;arclenth 1}
 ds^2= g_{ij}(x,x')dx^idx^j \quad \textrm{and}\quad d\bar s^2= \bar g_{ij}(x,x')dx^idx^j,
 \ee
  respectively,  we can establish a relation between  $s$ and $\bar s$ and their derivatives.
  We have  $\frac{\delta}{d\bar s}=\frac{\delta}{ds} . \frac{ds}{d\bar s}$, where
\be\label{Eq;arclenth}
\frac{d\bar s}{ds}=\sqrt{\bar g_{jk}(x,x')\frac{d x^j}{ds}\frac{d x^k}{ds}}\neq 0, \quad
\frac{d s}{d \bar s}=\sqrt{ g_{jk}(x,x')\frac{d x^j}{d\bar s}\frac{d x^k}{d\bar s}}\neq 0.
\ee
If we have $d\bar s =e^{\sigma} ds$, or equivalently  by means of Eq. (\ref{Eq;arclenth 1}), if   $\bar g=e^{2\sigma} g$ or  $\bar F=e^{\sigma} F$, where $\sigma$ is a scalar function on $M$, then two Finsler structures $\bar F$ and $F$ are said to be {\it conformal}.

 \section{Circles in a Minkowski space}
Let $(V, F)$ be a Minkowski space where $V$ is a vector space and $F$ is a Minkowski norm on $V$.
In a standard coordinate system in $V$,
\be\label{Eq;Minkowski}
 G^i =0, \ \ \ \ \ N^i_j =0.
 \ee
Then for a vector field $X= X^i(t)\frac{\pa }{\pa x^i} |_{c(t)}$ along a curve $c(t)$,    the Cartan covariant derivative $\nabla_{\dot c} X= \frac{\delta X^i}{dt} \frac{\pa }{\pa x^i} |_{c(t)}$  given by Eq. (\ref{CartanCovDerAlongC1}) reduces to
\begin{eqnarray}\label{CartanCovDerAlongC2}
\frac{\delta X^i}{ dt} & = &\frac{d X^i}{ dt}+ \Gamma_{kh}^i X^k\frac{dx^h}{dt}.
\end{eqnarray}
In particular, for $X= c'$, we have
\[ \frac{\delta X^i}{ds} = \frac{d^2 x^i}{ds^2}+ \Gamma_{kh}^i \frac{dx^k}{ds}\frac{dx^h}{ds}= \frac{d^2 x^i}{ds^2}+ G^i,\]
 where we have used  $\Gamma^i_{jk} \frac{dx^j}{dt} \frac{dx^k}{dt}= \gamma^i_{jk} \frac{dx^j}{dt} \frac{dx^k}{dt}=G^i,$ for which $\gamma^i_{jk}$ are the formal Christoffel symbols. Thus Eq. (\ref{Eq;Minkowski}) yeilds
   \[ \frac{\delta X^i}{ds} = \frac{d^2 x^i}{ds^2}.\]

Therefore in a Minkowski space, a curve $c(s)$ with arc-length parameter $s$ is a circle if and only if
\be
\frac{d^3 x^i}{ds^3} + k^2
\frac{dx^i}{ds}=0, \label{mmm}
\ee
where $k$ is a constant.
In this case $ g_{hk}\frac{d^2 x^h}{ds^2} \frac{d x^k}{ds} =0$ and
$k =\sqrt{ g_{hk}\frac{d^2 x^h}{ds^2} \frac{d^2 x^k}{ds^2} }$.

Now let us take a look at circles in a special Minkowski space $(\R^2, F_b)$, where
\[  F_b :=\sqrt{u^2+v^2} + bu ,\]
where $b$ is a positive constant with $ 0 < b < 1$. $(\R^2, F_b)$ is called a {\it Randers plane}.
Consider a curve $c(s) = (x(s), y(s))$ in $\R^2$ with unit speed, namely, $c'(s)=(x'(s), y'(s))$ is a unit vector.
Thus
\[   \sqrt{ x'(s)^2 + y'(s)^2} + b x'(s)= 1.\]
We can let
\[  x'(s) = \frac{ \cos \theta (s) - b}{1-b^2}, \ \ \ \ \ \ y'(s)= \frac{\sin \theta(s)}{\sqrt{1-b^2}}.\]
where $\theta(s)$ is a smooth function. Since $b$ is bounded, components of $c'(s)$
are well defined and one can find out explicitly equation of $c(s)$, the unit circle in the Randers plane $(\R^2, F_b)$.
\section{Vogel Theorem in Finsler geometry}

\bigskip

Let $\phi: (M, g) \longrightarrow (\bar{M}, \bar{g})$ be a  diffeomorphism.
  We say that $\phi$  \emph{preserves circles}, if it maps circles to  circles.  More precisely, if $c(s)$ is a circle in $(M, g)$, where $s$ is the arc-length of $c$ with respect to $g$, then $\bar{c}(\bar{s}):=\phi \circ c (s(\bar{s}))$ is a circle in $(\bar{M}, \bar{g})$, where  $s = s (\bar{s})$ is the arc-length of $\phi \circ c$ with respect to $\bar{g}$.

We recall the following lemma from linear algebra which will be used  in the sequel.
\begin{lem}\label{lem;linear Algebra}
Let $F$ and $G$ be the two bilinear symmetric  forms on $\R^n$, satisfying
\begin{itemize}
\item $F$ and $G$ are definite positive.
\item $F$ and $G$ are defined on $\R^n\times\R^n\longrightarrow\R,$ such that
$F(X,Y)=0$, $\forall X,Y \in\R^n,$ with
\begin{eqnarray}\label{Eq;bilinearForms}
  &&G(X,X)\neq0,  G(Y,Y)\neq0 , \textrm{and } G(X,Y)=0,
 \end{eqnarray}
 \end{itemize}
  then there is a positive real number $\alpha$ such that
$F=\alpha G.$
\end{lem}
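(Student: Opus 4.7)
The plan is to use simultaneous diagonalization of the two forms and then exploit the orthogonality hypothesis on carefully chosen test vectors.

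First, I would observe that since $G$ is positive definite, the condition $G(X,X)\neq 0$ is automatic for every nonzero $X$, so the hypothesis reduces to the following: whenever $X,Y\in\mathbb{R}^n$ are nonzero and $G(X,Y)=0$, then $F(X,Y)=0$. Next, since $G$ is an inner product and $F$ is a symmetric bilinear form, the standard simultaneous diagonalization argument (spectral theorem for the $G$-self-adjoint operator $A$ defined by $F(X,Y)=G(AX,Y)$) gives a basis $\{e_1,\dots,e_n\}$ of $\mathbb{R}^n$ that is $G$-orthonormal and $F$-orthogonal. Thus $G(e_i,e_j)=\delta_{ij}$ and $F(e_i,e_j)=\lambda_i\delta_{ij}$ for some real numbers $\lambda_i$, which are strictly positive because $F$ is positive definite.

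The main step is to show all the $\lambda_i$ coincide. For any indices $i\neq j$, set $X:=e_i+e_j$ and $Y:=e_i-e_j$. Then a direct computation gives
\[
G(X,Y)=G(e_i,e_i)-G(e_j,e_j)=1-1=0,
\]
while $X$ and $Y$ are plainly nonzero. By the hypothesis, $F(X,Y)=0$. Expanding,
\[
0=F(X,Y)=F(e_i,e_i)-F(e_j,e_j)=\lambda_i-\lambda_j,
\]
so $\lambda_i=\lambda_j$. Since $i,j$ are arbitrary, there is a common value $\alpha:=\lambda_1=\cdots=\lambda_n>0$.

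Finally, in the basis $\{e_i\}$, both $F$ and $\alpha G$ have the same matrix $\alpha I$, so $F=\alpha G$ on $\mathbb{R}^n$, completing the proof. The only delicate point is invoking simultaneous diagonalization of a pair consisting of an inner product and a symmetric form, which is a standard fact from linear algebra; after that the verification on the vectors $e_i\pm e_j$ is mechanical, so I do not anticipate any real obstacle.
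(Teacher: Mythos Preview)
Your proof is correct and follows essentially the paper's strategy: pick a $G$-orthonormal basis in which $F$ is diagonal, then test on $G$-orthogonal pairs built from basis vectors (the paper uses $ae_i+be_j$ and $be_i-ae_j$, you use $e_i\pm e_j$) to force all diagonal entries to coincide. One simplification worth noting: your appeal to the spectral theorem is unnecessary, since the hypothesis applied directly to $e_i,e_j$ with $i\neq j$ already shows that \emph{any} $G$-orthonormal basis is automatically $F$-orthogonal.
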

\begin{proof}
Let $\{ e_i\},$  be an orthonormal basis on $\R^n$ such that $G (e_i, e_j)=\delta_{ij}$ where $i,j=1,...,n$.  Eq.(\ref{Eq;bilinearForms}) with definite positiveness of $F$ and $G$ imply  that there is a positive real number $\alpha_i$ such that $F(e_i,e_j)=\alpha_i\delta_{ij},$ and hence
 \be \label{Eq;bilinearForms2}
F(e_i,e_j)=\alpha_iG (e_i, e_j).
 \ee
 Let $a,b\in \R-\{0\}$ with $a^2\neq b^2$ , then for $i\neq j$ we have
\begin{eqnarray*}
&&G(ae_i + be_j,ae_i + be_j) = a^2 + b^2 \neq 0,\\
&&G(be_i - ae_j,be_i - ae_j) = b^2 + a^2 \neq 0,\\
&&G(ae_i + be_j,be_i - ae_j) = 0.
\end{eqnarray*}
This equation together with Eqs.(\ref{Eq;bilinearForms}) and (\ref{Eq;bilinearForms2}) for $i\neq j$ imply $$0=F(ae_i + be_j,be_i - ae_j)=ab(\alpha_iG(e_i,e_i)-\alpha_jG(e_j,e_j))$$ and hence $0 = ab(\alpha_i-\alpha_j)$. Therefore we obtain $\alpha_i=\alpha_j=\alpha$, and  $F=\alpha G,$ which completes the proof.
\end{proof}
 Next we prove the following theorem.
 \setcounter{thm}{0}
 \begin{thm}\label{thm;Vogel1}
 Every circle-preserving local diffeomorphism of a Finsler manifold is conformal.
\end{thm}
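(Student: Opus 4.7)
The plan is to identify $(M,g)$ with $(M,\bar g)$ via $\phi$ (treating $\phi$ as the identity on $M$), so that we obtain two Finsler metrics on $M$ whose unparametrized circles coincide. The goal then reduces to showing $\bar F = e^{\sigma(p)} F$ on $TM_0$, equivalently $\bar g_{ij}(p,y) = e^{2\sigma(p)} g_{ij}(p,y)$ at every $(p,y)$. I would fix $p\in M$, a $g$-unit vector $u\in T_pM$, and a vector $v\in T_pM$ with $g_u(u,v)=0$ and $g_u(v,v)=1$. For each $k>0$, Lemma 1, read as a third-order ODE, produces a $g$-circle $c(s)$, parametrized by $g$-arc length, with $\dot c(0)=u$ and $\nabla^g_{\dot c}\dot c(0)=kv$, and the circle ODE then forces $\nabla^g_{\dot c}\nabla^g_{\dot c}\dot c(0)=-k^2u$. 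By the circle-preserving hypothesis $c$ is, after reparametrization by $\bar g$-arc length, also a $\bar g$-circle, and by Lemma 2 this is equivalent to
\[
V_{\bar g}(c) := \nabla^{\bar g}_{\dot c}\nabla^{\bar g}_{\dot c}\dot c - 3\,\frac{\bar g(\dot c,\nabla^{\bar g}_{\dot c}\dot c)}{\bar g(\dot c,\dot c)}\,\nabla^{\bar g}_{\dot c}\dot c
\]
being proportional to $\dot c$ along $c$, in particular at $s=0$.

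The next step is to expand $V_{\bar g}(c)(0)$ in terms of the $g$-data. Writing $D=\omega^{\bar g}-\omega^g$ for the difference of the two Cartan connection one-forms (viewed as a section over $TM_0$), a direct computation gives
\[
\nabla^{\bar g}_{\dot c}\dot c(0) = kv + D(u,u), \qquad \nabla^{\bar g}_{\dot c}\nabla^{\bar g}_{\dot c}\dot c(0) = -k^2 u + 3k\,D(u,v) + E_0,
\]
where $E_0\in T_pM$ is independent of $k$. Substituting into $V_{\bar g}(c)(0)$ exhibits it as a degree-$2$ polynomial in $k$ with values in $T_pM$, whose $k^2$-coefficient is
\[
-u - 3\,\frac{\bar g_u(u,v)}{\bar g_u(u,u)}\,v.
\]
Since this polynomial must be parallel to $u$ for every $k>0$, each coefficient is separately parallel to $u$; in particular the $v$-component of the $k^2$-coefficient vanishes, forcing $\bar g_u(u,v)=0$. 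Thus for every unit $u\in T_pM$, $g_u$-orthogonality to $u$ implies $\bar g_u$-orthogonality to $u$, so there is a positive scalar $\lambda(p,u)$ with $\bar g_u(u,\cdot)=\lambda(p,u)\,g_u(u,\cdot)$ on all of $T_pM$.

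Finally, using the standard Finsler identity $g_{ij}(p,y)\,y^i = \tfrac12\partial_{y^j}F^2(p,y)$ (and its $\bar g$-analogue), this last relation reads $\partial_{y^j}\bar F^2(p,u) = \lambda(p,u)\,\partial_{y^j}F^2(p,u)$. Contracting with $u^j$ and applying Euler's theorem for the homogeneous function $F^2$ yields $\lambda=\bar F^2/F^2$, and substituting back gives $\partial_{y^j}\log(\bar F^2/F^2)=0$. Hence $\bar F^2/F^2$ depends on $p$ alone; writing it as $e^{2\sigma(p)}$ we obtain $\bar F = e^{\sigma}F$, which is the claimed conformality.

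The main obstacle is the second step. In Finsler geometry the Cartan-connection coefficients $\Gamma^i_{kh}+C^i_{ks}N^s_h$ depend on the line element, so $D$ is not a genuine tensor on $M$ but a section over $TM_0$ with both horizontal and vertical contributions; when differentiating $D(\dot c,\dot c)$ a second time along $c$ one must carry the Cartan torsion and keep track of derivatives of $D$ in both $x$ and $y$. Fortunately, the whole argument only uses the leading $k^2$-coefficient, whose structure is dictated by the $g$-circle ODE $\nabla^g_{\dot c}\nabla^g_{\dot c}\dot c(0)=-k^2u$ and by the first-order formula $\nabla^{\bar g}_{\dot c}\dot c(0)=kv+O(1)$; the subtle lower-order $D$-terms enter only into the $k^1$- and $k^0$-coefficients and are irrelevant for establishing $\bar g_u(u,v)=0$.
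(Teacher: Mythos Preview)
Your proposal is correct and follows essentially the same strategy as the paper: fix $p$, run a one-parameter family of $g$-circles through $p$ with initial data $(u,kv)$, apply the parameter-free circle criterion (Lemma~\ref{lem;circle}) for $\bar g$, and read off from the $k^2$-coefficient that $\bar g_u(u,v)=0$ whenever $g_u(u,v)=0$.

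Two points of difference are worth flagging. First, you explicitly track the discrepancy between the two Cartan connections via $D=\omega^{\bar g}-\omega^g$ and argue that all $D$-contributions land in the $k^1$ and $k^0$ coefficients; the paper's computation of $\ddot c$ and $\overline{W}$ is terser on this point, and your expansion makes the degree count transparent. Second, for the passage from the orthogonality relation to conformality, the paper appeals to the linear-algebra Lemma~\ref{lem;linear Algebra} to obtain $\bar g=\alpha^2 g$, whereas you bypass that lemma entirely: from $\bar g_u(u,\cdot)=\lambda(p,u)\,g_u(u,\cdot)$ you use $g_{ij}y^i=\tfrac12\partial_{y^j}F^2$ and Euler homogeneity to show $\lambda=\bar F^2/F^2$ is independent of $u$. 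Your argument is more intrinsically Finslerian, since it never needs the full bilinear-form identity $\bar g_u=\alpha\,g_u$ (which Lemma~\ref{lem;linear Algebra} would deliver but which requires checking orthogonality for \emph{all} pairs, not just pairs containing the reference direction $u$); it extracts exactly what is needed and no more.
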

\begin{proof} Without loss of generality we can consider two Finsler metrics $g$ and $\bar{g}$ on the same manifold.
Fix a point $p\in M$. For arbitrary two unit vectors $X, Y\in T_pM$ such that $Y$ is orthogonal to $X$ with respect to $g=g_{_X}$, let ${\cal C}=\{c_k|k\in \R\}$ be a family of  circles with the constant  curvature $k$ passing through a fixed point $c_{_k}(0) = p$ on $(M,g)$ such that
\be \label{Eq;circle equ}
  \frac{dc}{ds}(0) = X,  \quad \textrm{and}\quad \nabla_{c'}X (0) =kY.
  \ee
We are going to show that $\bar{g}(X, Y)=0$, where $\bar{g}:=\bar{g}_{_X}$.

Since $c$ is assumed to be a circle with respect to the Finsler metric $g$, Eq. (\ref{Eq: 1.9+1}) yields, $c'''$ is a multiple of $c'$.
By Lemma \ref{lem;circle}, $c$ is a circle with respect to $\bar g$ if and only if $\dddot c $ and $\ddot c $ are parallel to $\dot c$ or $c'$.

On the other hand, by virtue of Eq. (\ref{Eq;successive2 deri}), $\ddot c $ is parallel to $\dot c$, if and only if $c''$ so does.

By means of Eq. (\ref{Eq;successive3 deri}), we can see that $\dddot c$ is  parallel to $\dot c$ if and only if  $\ddot c $ so does. Denote the  second term in right-hand side of Eq. (\ref{Eq;successive3 deri}), by ${\cal \overline{ W}}:=3\bar g(\dot c,\ddot c) c''$. Therefore, $c$ is a circle with respect to $\bar g$ if and only if ${\cal \overline{ W}}$ is parallel to  $c'= X$ at the point $p=c(0)$. At  $p$, by involving Eqs. (\ref{Eq;successive1 deri}) and (\ref{Eq;circle equ})  we have $\dot c=c'\bar g(\dot c,\dot c)^{1/2}=X\bar g(\dot c,\dot c)^{1/2}$, where  $\bar g(\dot c,\dot c)\neq0$ is constant by means of Eq. (\ref{Eq;arclenth}). Hence, Eq.
(\ref{Eq;circle equ}) yields $\ddot c=\bar{\nabla}_{\dot{c}} \dot{c}=\frac{d }{ds}(\bar g(\dot c,\dot c)^{1/2} X)\frac{ds}{dt}$ and $\ddot c=k Y\bar g(\dot c,\dot c)$. Therefore we obtain
\begin{eqnarray*}
{\cal \overline{ W}} = 3\bar g(\dot c,\dot c)^{3/2}\bar g(X , Y )Y \ k^2.
\end{eqnarray*}
  Hence, $c$ is a circle with respect to $\bar g$ if and only if the vector field ${\cal \overline{ W}}$ is parallel to $X$ or equivalently, $\bar g(X , Y )Y$ is parallel to $X$ for every $X\in T_pM$ and every $Y\in T_pM$ orthonormal to $X$. This implies $\bar g(X , Y )=0$ whenever $g(X,Y)=0$ and by Lemma \ref{lem;linear Algebra}, there is a positive scalar $\alpha^2$ where $\bar g=\alpha^2 g$. Hence, the Finsler metrics $\bar g(x,x')$ and $ g(x,x')$ are conformally related.
\end{proof}

  A geodesic circle is a curve for which the first Frenet curvature $k_1$ is constant and the second curvature $k_2$ vanishes. In the other words a geodesic circle is a torsion free constant curvature curve. In Riemannian geometry as well as in Finsler geometry, a concircular transformation is defined to be a conformal transformation which preserves geodesic circles.

  By replacing the positive scalar $\alpha$ in proof of the  theorem \ref{thm;Vogel1} by $\alpha=e^\sigma$, we get $\bar g=e^{2\sigma} g$, or equivalently $d\bar s =e^{\sigma} ds$ where $\sigma$ is a scalar function on $M$. Therefore, as a corollary of Theorem \ref{thm;Vogel1} we have
  \begin{thm}\label{thm;Vogel2 }
  Every local diffeomorphism of a Finsler manifold which preserve geodesic circles  is conformal.
\end{thm}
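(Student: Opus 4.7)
The plan is to deduce the theorem almost directly from Theorem~\ref{thm;Vogel1}. A geodesic circle, having constant first Frenet curvature $k_1$ and vanishing second curvature $k_2$, is exactly a solution of Eq.~(\ref{Eq;circle equ+1}); by Lemma~\ref{prop;diff eq geod circ} this class splits into geodesics ($k_1=0$) and circles in the sense of Definition~1 ($k_1>0$). In particular, every circle is a geodesic circle, so a geodesic-circle-preserving local diffeomorphism $\phi$ sends each circle $c$ on $(M,g)$ to a geodesic circle on $(M,\bar g)$, which is either a $\bar g$-circle or a $\bar g$-geodesic.

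Next, I would extend the characterization of Lemma~\ref{lem;circle} to the whole class of geodesic circles: for a curve $c(t)$ with an arbitrary parameter, $c$ is a geodesic circle with respect to $\bar g$ if and only if
$$V := \dddot c - 3\frac{\bar g(\dot c,\ddot c)}{\bar g(\dot c,\dot c)}\ddot c$$
is tangent to $c$. In the non-geodesic case this is Lemma~\ref{lem;circle} applied to $\bar g$; in the geodesic case $\ddot c=0$ and the condition is vacuously satisfied. Consequently, the key necessary condition used in the proof of Theorem~\ref{thm;Vogel1}, namely that ${\cal \overline{W}}$ be parallel to $X$ at the base point $p$, remains valid here. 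I would then reproduce the calculation of Theorem~\ref{thm;Vogel1} verbatim: fix $p\in M$, orthonormal vectors $X,Y\in T_pM$ with respect to $g$, and the family of circles $\{c_k\}_{k>0}$ through $p$ with $c_k'(0)=X$ and $\nabla_{c'}X(0)=kY$. Demanding that ${\cal \overline{W}} = 3\bar g(\dot c,\dot c)^{3/2}\bar g(X,Y)Y\,k^2$ be parallel to $X$ forces $\bar g(X,Y)=0$ whenever $g(X,Y)=0$, and Lemma~\ref{lem;linear Algebra} then yields $\bar g=\alpha^2 g$ for a positive function $\alpha$ on $M$. Writing $\alpha=e^{\sigma}$ produces the conformal relation $\bar g = e^{2\sigma} g$, or equivalently $\bar F = e^\sigma F$ and $d\bar s = e^\sigma ds$.

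The only point requiring genuine care is the uniform extension of Lemma~\ref{lem;circle} to cover the possibility that some image $\bar c$ degenerates to a $\bar g$-geodesic, so that the proof of Theorem~\ref{thm;Vogel1} can be invoked without modification even when the hypothesis is only that geodesic circles (rather than strict circles) are preserved. Once this observation is in place, the remainder of the argument is mechanical, and the rewriting $\alpha=e^\sigma$ is the cosmetic step already indicated in the paragraph preceding the statement.
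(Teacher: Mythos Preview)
Your proposal is correct and follows the paper's own route: the paper treats Theorem~\ref{thm;Vogel2 } as an immediate corollary of Theorem~\ref{thm;Vogel1}, its entire ``proof'' being the observation that one may write the positive scalar $\alpha$ as $e^{\sigma}$. You supply the honest justification the paper omits---namely, that a geodesic-circle-preserving map sends each $g$-circle to a $\bar g$-geodesic circle (possibly a geodesic), and that the tangency condition on $V$ from Lemma~\ref{lem;circle} still holds in that degenerate case, so the computation of $\overline{\mathcal W}$ and the appeal to Lemma~\ref{lem;linear Algebra} go through unchanged. One small imprecision: for a $\bar g$-geodesic in an arbitrary parameter one has $\ddot c$ parallel to $c'$ rather than literally $\ddot c=0$, but the conclusion that $V$ is tangent to $c$ is unaffected.
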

This result shows that in the definition of concircular transformations the conformal assumption is not necessary.

\textbf{Acknowledgments}. The authors take this opportunity  to express their sincere gratitude to the Professor H. Akbar-Zadeh for his suggestions on this work and his contributions on Finsler geometry.


\end{document}